\documentclass[10pt]{amsart}

\usepackage[latin1]{inputenc}
\usepackage[english]{babel}
\usepackage{amsmath}
\usepackage{amsfonts}
\usepackage{amssymb}
\usepackage{amsthm}
\usepackage{epsfig}
\usepackage{graphicx,colortbl}
\usepackage{url,hyperref}
\usepackage{mathtools, color, changes}

\newtheorem{theorem}{Theorem}[section]
\newtheorem{lemma}{Lemma}[section]

\newtheorem{rmk}{Remark}[section]

\newcounter{theor}

\def\R{\mathbb{R}}
\def\N{\mathbb{N}}
\def\Z{\mathbb{Z}}

\def\2Z{2^{-m}\Z^n}
%Funcionales

%extensión de función

%Medias y sumas

%Otro conjuntos

\numberwithin{equation}{section}

\begin{document}

\title[Reduction of the slicing problem to symmetric bodies]{A note on the reduction of the slicing problem to centrally symmetric convex bodies}
% or if you want, simply \title{Title of the article}
\author{Javier Mart\'i­n-Go\~ni}
\address{Departamento de Matem\'aticas. Facultad de Ciencias. Universidad de Zaragoza. Pedro Cerbuna 12, 50009, Zaragoza (Spain), IUMA.}
\email{j.martin@unizar.es, 745533@unizar.es}
\thanks{Partially supported by MICINN project PID-105979-GB-I00 and DGA project E48\_20R.}
\maketitle

\begin{abstract}
In this paper, we obtain the best possible value of the absolute constant $C$ such that for every isotropic convex body $K\subseteq\R^n$ the following inequality (which was proved by Klartag and reduces the hyperplane conjecture to centrally symmetric convex bodies) is satisfied:
$$
L_K\leq CL_{K_{n+2}(g_K)}.
$$
Here $L_K$ denotes the isotropic constant of $K$, $g_K$ its covariogram function, which is log-concave, and, for any log-concave function $g$, $K_{n+2}(g)$ is a convex body associated to the log-concave function $g$, which belongs to a uniparametric family introduced by Ball. In order to obtain this inequality, sharp inclusion results between the convex bodies in this family are obtained whenever $g$ satisfies a better type of concavity than the log-concavity, as $g_K$ is, indeed $\frac{1}{n}$-concave.
\end{abstract}

\section{Introduction and notation}
A convex body $K\subseteq\R^n$ (i.e., a compact convex set with non-empty interior) is called isotropic if it has volume (Lebesgue measure), which we will denote by $|K|$, equal to 1, its barycenter is at the origin, and there exists $L_K>0$, known as the isotropic constant of $K$, such that for every $\theta\in S^{n-1}$, the Euclidean unit sphere,
$$
\int_K\langle x,\theta\rangle^2dx=L_K^2,
$$
where $\langle\cdot,\cdot\rangle$ denotes the standard scalar product of two vectors.

It is well known that for every convex body $K\subseteq\R^n$, there exists a unique (up to orthogonal maps) affine map $a+T$, with $a\in\R^n$ and $T\in GL(n)$, the set of non-degenerate $n\times n$ linear maps, such that $a+TK$ is isotropic and then one can define the isotropic constant for every non-necessarily isotropic convex body as the isotropic constant of its isotropic images. It is also well known (see, for instance, \cite[Lemma 4.1]{MP}) that, among all the $n$-dimensional convex bodies, the Euclidean ball $B_2^n:=\{x\in\R^n\,:\Vert x\Vert_2\leq 2\}$, where $\Vert\cdot\Vert_2$ denotes the Euclidean norm in $\R^n$, is the one with the smallest isotropic constant and that its value is bounded below by a positive absolute constant, independent of $K$ and the dimension $n$. We call a constant absolute whenever it is independent of every other parameter involved.

However, it is still an open problem, known as the slicing problem, whether there exists an absolute constant bounding from above the isotropic constant of every isotropic convex body in every dimension. This question was posed by Bourgain, who proved in \cite{Bou} the following dimension-dependent upper bound for the isotropic constant of any convex body $K\subseteq\R^n$ : $L_K\leq Cn^{1/4}\log n$, where $C$ is an absolute constant. This upper bound was improved by Klartag \cite{K1}, removing the logarithm in Bourgain's upper bound. It was only recently when a major breakthough occurred and an upper bound of an order smaller that any power of $n$ has been obtained. Such a result is deduced from Chen's estimate of the best constant in the Kannan-Lov\'asz-Simonovits spectral gap conjecture \cite{Ch} and the known relation between both conjectures (see \cite{EK}). This result has been improved by Klartag and Lehec \cite{KL}, and Jambulapati, Lee and Vempala \cite{JLV}, which provided polylogarithmic in the dimension upper bounds for the isotropic constant of convex bodies.

This conjecture has been verified for several classes of convex bodies such us $1$-unconditional convex bodies, (see \cite{Bou0} or \cite{MP}), duals to zonoids (see \cite{B2} or \cite{EM}), bodies with a bounded outer volume ratio \cite{MP}, random bodies \cite{KK}, unit balls of Schatten norms \cite{KMP}, or polytopes with a small number of vertices \cite{ABBW}, and several reductions of the problem have been proved like a reduction to convex bodies with small diameter \cite{Bou} (see also \cite[Proposition 3.3.3]{BGVV}), or a reduction to convex bodies with finite volume ratio \cite{BKM}.

In this paper we are going to focus on the reduction to centrally symmetric convex bodies, which was proved by Klartag in \cite[Lemma 2.3]{K0} (see also \cite[Proposition 2.5.10]{BGVV}). A particular case of this results states the following:

Given an isotropic convex body $K\subseteq\R^n$, there exists another isotropic centrally symmetric convex body $T\subseteq\R^n$ and an absolute constant $C>0$ such that
\begin{equation}\label{eq:Klartag}
L_K\leq CL_T.
\end{equation}
Therefore, if the hyperplane conjecture is verified by centrally symmetric convex bodies, it is verified by every convex body. When examining the proof of this result, one observes the following three things:
\begin{enumerate}
\item[1)] This result is proved in the more general setting of log-concave functions (i.e, functions whose logarithm is concave), which is the context in which the hyperplane conjecture and other important conjectures are studied, stating that there exists an absolute constant $C>0$ such that if $f:\R^n\to[0,\infty)$ is an integrable isotropic log-concave function (i.e., with $\displaystyle{\int_{\R^n}f(x)dx=1}$, $\displaystyle{\int_{\R^n}xf(x)dx=0}$, and $\displaystyle{\int_{\R^n}\langle x,\theta\rangle^2f(x)dx=1}$ for every $\theta\in S^{n-1}$) there exists a centrally symmetric convex body $T\subseteq\R^n$ such that
\begin{equation}\label{eq:Klartag2}
L_f:=\left(\sup_{x\in\R^n}f(x)\right)^\frac{1}{n}\leq CL_T.
\end{equation}
\item[2)] The convex body $T$ appears as the convex body $K_{n+2}(g_K)$, where $g_K$ denotes the covariogram function of $K$ and, for a given log-concave function $g$, $(K_p(g))_{p>0}$ denotes a uniparametric family of convex bodies associated to $g$, which was introduced by Ball in \cite{B}, and which is defined, for $g:\R^n\to[0,\infty)$ a log-concave function with $g(0)>0$, and $p>0$, as
    $$
    K_p(g):=\left\{x\in\R^n\,:\,\int_0^\infty pt^{p-1}g(tx)dt\geq g(0)\right\}.
    $$
\item[3)] The constant $C$ appears as an upper bound of the ratio of the volume of the convex body $K_{n+2}(g_K)$ and the volume of $K_{n}(g_K)$, which is obtained by a general inclusion relation between the convex bodies $(K_p(g))_{p>0}$ for a general log-concave function $g$.
\end{enumerate}

In this paper we obtain the value of the best possible constant in relation \eqref{eq:Klartag} when one considers such relation in the setting of convex bodies instead of the absolute constant that appears in the more general setting of log-concave functions stated in equation \eqref{eq:Klartag2} and in the inclusion relation mentioned in 3). More precisely, we prove the following
\begin{theorem}\label{thm:TeoremaPrincipal}
Let, for every $n\in\N$, $D_n=\frac{1}{\sqrt{2}}\frac{{{2n}\choose{n}}^{\frac{1}{2}+\frac{1}{n}}}{{{2n+2}\choose{n}}^{\frac{1}{2}}}$.
Then, for every isotropic convex body $K\subseteq\R^n$
$$
L_K\leq D_nL_{K_{n+2}(g_K)}.
$$
Furthermore, there is equality if and only if $K$ is a regular simplex. Moreover, $\sup_{n\in\N}D_n=\sqrt{2}$.
\end{theorem}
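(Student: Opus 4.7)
My plan is to convert the desired inequality into a sharp volume bound for Ball's body $K_{n+2}(g_K)$ and then invoke the sharp inclusion theorem for $\frac{1}{n}$-concave functions whose proof is the technical heart of the paper. First I would establish the algebraic link between $L_K$ and $L_{K_{n+2}(g_K)}$. Writing the radial function of $K_p(g)$ in polar coordinates from the definition, a direct computation gives, for any log-concave $g$ with $g(0)>0$ and every $\theta\in S^{n-1}$,
$$
\int_{K_{n+2}(g)}\langle x,\theta\rangle^2\,dx=\frac{1}{g(0)}\int_{\R^n}\langle x,\theta\rangle^2g(x)\,dx,\qquad|K_n(g)|=\frac{1}{g(0)}\int_{\R^n}g(x)\,dx.
$$
Specializing to $g=g_K$ with $K$ isotropic gives $g_K(0)=|K|=1$ and $\int_{\R^n}g_K=|K|^2=1$, hence $|K_n(g_K)|=1$; a standard change of variables in $\int_{K\times K}\langle y-z,\theta\rangle^2\,dy\,dz$ (using that $K$ is centered) yields $\int_{\R^n}\langle x,\theta\rangle^2g_K(x)\,dx=2L_K^2$ for every $\theta$. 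Since $g_K$ is even, $K_{n+2}(g_K)$ is centered and its covariance matrix is a scalar multiple of the identity by the preceding identity, so reading off its isotropic constant from its volume produces the key formula
$$
L_K^2=\frac{1}{2}L_{K_{n+2}(g_K)}^2|K_{n+2}(g_K)|^{\frac{n+2}{n}}.
$$

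The problem has thereby been reduced to a sharp upper bound on $|K_{n+2}(g_K)|$ relative to $|K_n(g_K)|=1$. Here the $\frac{1}{n}$-concavity of $g_K$ (a direct consequence of the Brunn--Minkowski inequality applied to $y\mapsto|K\cap(K+y)|$) enters, and the sharp inclusion the paper establishes for $\frac{1}{n}$-concave functions provides
$$
|K_{n+2}(g_K)|^{\frac{n+2}{2n}}\leq\frac{\binom{2n}{n}^{\frac{1}{2}+\frac{1}{n}}}{\binom{2n+2}{n}^{\frac{1}{2}}}|K_n(g_K)|^{\frac{n+2}{2n}},
$$
with equality if and only if $g_K$ is, up to affine transformations, the covariogram of a simplex. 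Plugging $|K_n(g_K)|=1$ into this inequality and combining with the identity from the previous paragraph is precisely $L_K\leq D_nL_{K_{n+2}(g_K)}$; equality forces $g_K$ to be a simplex covariogram, which, together with the isotropy of $K$, pins down $K$ as a regular simplex.

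For the asymptotic statement $\sup_{n\in\N}D_n=\sqrt{2}$, the classical limit $\binom{2n}{n}^{1/n}\to 4$ together with $\binom{2n+2}{n}/\binom{2n}{n}=2(2n+1)/(n+2)\to 4$ gives $D_n\to\sqrt{2}$, and a direct check on the explicit formula (e.g.\ by analyzing the sign of $D_{n+1}/D_n-1$ via elementary manipulations of binomial coefficients) shows that $(D_n)$ is monotone and the limit is in fact the supremum. The main obstacle in the whole scheme is the sharp inclusion invoked in the second paragraph: once it is available, the reduction is a compact assembly of standard covariogram identities, and it is pinning down the simplex as the unique extremizer of that inclusion that upgrades the bound from an absolute constant to the explicit sharp sequence $D_n$.
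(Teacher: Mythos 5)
Your reduction of the inequality and of the equality case follows the paper's own route essentially verbatim: the identity $L_{K_{n+2}(g_K)}^2=2L_K^2/|K_{n+2}(g_K)|^{1+2/n}$ obtained from the covariogram second-moment identity together with integration in polar coordinates, the volume bound $|K_{n+2}(g_K)|\le \binom{2n}{n}\binom{2n+2}{n}^{-n/(n+2)}$ coming from the sharp inclusion for $\frac1n$-concave functions applied with $p=n$, $q=n+2$, combined with $|K_n(g_K)|=\int_{\R^n}g_K=1$, and the identification of the equality case through $g_K(x)=(1-\Vert x\Vert_{K-K})^n\chi_{K-K}(x)$ and the characterization of simplices by the level sets of their covariogram (Lemma \ref{lem:simplexcaracterization}, which you invoke implicitly when asserting that equality forces $g_K$ to be a simplex covariogram). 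All of this is correct and coincides with the paper's argument.

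The genuine gap is in the claim $\sup_{n\in\N}D_n=\sqrt2$. The Stirling limit $D_n\to\sqrt2$ only yields $\sup_n D_n\ge\sqrt2$; to conclude you must prove $D_n\le\sqrt2$ for every $n$, and your proposal to do so ``by analyzing the sign of $D_{n+1}/D_n-1$ via elementary manipulations of binomial coefficients'' is an assertion, not an argument. Because of the exponent $2/n$, the ratio $D_{n+1}^2/D_n^2$ compares $\binom{2n+2}{n+1}^{2/(n+1)}$ with $\binom{2n}{n}^{2/n}$, and these fractional powers do not cancel through binomial identities; moreover, monotonicity of $(D_n)$ is, given the limit, a stronger statement than the bound $D_n\le\sqrt2$ that is actually needed. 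The paper devotes the remainder of Section \ref{sec:MainResult} precisely to this point: it rewrites $1/D_n^2=2\binom{2n+2}{n}\binom{2n}{n}^{-1-2/n}$ using Catalan numbers, reduces the desired bound to $\left(\frac{16n}{n+2}\right)^n\ge(n+1)^2C_n^2$, and proves that inequality by induction (Lemma \ref{lem:Induction}) resting on the auxiliary estimate of Lemma \ref{mayorque1}, whose proof uses the bounds $\frac{x}{1+x}\le\log(1+x)\le x$ and a polynomial positivity check. You would need to supply an argument of comparable substance (or some other proof that $D_n\le\sqrt2$ for all $n$) before the last assertion of the theorem can be considered proved.
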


In order to prove this theorem, we obtain a sharp inclusion relation between the convex bodies in the family $(K_p(g))_{p>0}$, whenever $g$ is an $\alpha$-concave function for some $\alpha>0$ (i.e., $g^\alpha$ is concave on its support), instead of log-concave, as when $K\subseteq\R^n$ is a convex body, the covariogram function $g_K$ is $\frac{1}{n}$-concave rather than log-concave. Let us point out that, while the natural setting to study many problems in convexity is the setting of log-concave functions, many sharp well-known inequalities in convex geometry appear as a consequence of some functions satisfying a better type of concavity (recall that any $\alpha$-concave function with $\alpha>0$ is, in particular, log-concave). Let us mention, for instance, Rogers-Shephard inequality  or Zhang's inequality, which can be obtained from the $\frac{1}{n}$-concavity of the covariogram function (see \cite{RS}, \cite{Z}, \cite{GZ}, or \cite{AJV}). Therefore, we believe that having sharp inclusion relations between the convex bodies in this family, which have typically been considered in the log-concave setting, for $\alpha$-concave functions might be interesting in itself. We state the relation in the following theorem (see notation below):

\begin{theorem} \label{thm:inlcusioninversain}
Let $g: \mathbb{R}^n \to [0, \infty ) $ be a continuous integrable $\alpha$-concave function such that $g(0) >0$. For any $0<p<q$, we have that
$$
 { \frac{1}{\alpha} + q \choose \frac{1}{\alpha}}^{1/q} K_q (g) \subseteq { \frac{1}{\alpha} + p \choose \frac{1}{\alpha}}^{1/p} K_p (g).
$$
Besides, if there exist $0<p<q$ such that this inclusion is an equality, then $\Vert g\Vert_\infty=g(0)$ and
$$
\frac{g(x)}{g(0)}=(1-\Vert x\Vert_L)^{1/\alpha}\chi_L(x),
$$
where $L=\textrm{supp}g$.
\end{theorem}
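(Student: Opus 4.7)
The plan is to reduce the set inclusion to a one-dimensional monotonicity for a weighted moment of $g$ along half-lines, and then to exploit that the extremal $\alpha$-concave functions on $[0,\infty)$ are the truncated powers $(1-t/T)_+^{1/\alpha}$, for which the relevant Ball-type integrals can be computed explicitly via the Beta function.

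First I would pass to a one-dimensional formulation. For every direction $x\in\R^n$ such that $g(tx)>0$ on a nontrivial initial interval, the function $\phi_x(t):=g(tx)/g(0)$ is $\alpha$-concave on $[0,\infty)$ with $\phi_x(0)=1$; integrability of $g$ together with $\alpha>0$ forces $\phi_x$ to be compactly supported. A change of variables shows that the radial-type function $R_p(x):=\bigl(\int_0^\infty p t^{p-1}\phi_x(t)\,dt\bigr)^{1/p}$ satisfies $R_p(\lambda x)=\lambda^{-1}R_p(x)$ and $K_p(g)=\{x:R_p(x)\geq 1\}$; therefore the claimed inclusion, with $C_r:=\binom{1/\alpha+r}{1/\alpha}^{1/r}$, is equivalent to the pointwise inequality
\[
\tilde T_q(x)\leq \tilde T_p(x),\qquad \tilde T_r(x):=\left[\binom{1/\alpha+r}{1/\alpha}\int_0^\infty r t^{r-1}\phi_x(t)\,dt\right]^{1/r},
\]
for every such $x$ and every $0<p<q$.

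Fix $x$ and $p$, and set $T_0:=\tilde T_p(x)$. A direct Beta-function computation gives $\int_0^\infty r t^{r-1}\psi_{T_0}(t)\,dt=T_0^{r}/\binom{1/\alpha+r}{1/\alpha}$ for the cone function $\psi_{T_0}(t):=(1-t/T_0)_+^{1/\alpha}$, so by construction $\psi_{T_0}$ and $\phi_x$ share the same $p$-th moment. The key geometric step is to prove that $\eta:=\phi_x-\psi_{T_0}$ changes sign at most once on $[0,\infty)$, going from $\geq 0$ to $\leq 0$: the function $\phi_x^\alpha-\psi_{T_0}^\alpha=\phi_x^\alpha-(1-t/T_0)$ is concave on the support of $\phi_x$ and vanishes at $t=0$, and a concave function on $[0,\infty)$ vanishing at the origin is either one-signed throughout or first non-negative and then non-positive. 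Equal $p$-th moments exclude the strictly one-signed alternatives and, by the same argument, force $\supp\phi_x\subseteq[0,T_0]$ (otherwise $\eta\geq 0$ everywhere with strict inequality somewhere, contradicting the moment identity).

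For $q>p$ I then apply the classical one-sign-change trick: using $\int_0^\infty t^{p-1}\eta\,dt=0$,
\[
\int_0^\infty t^{q-1}\eta(t)\,dt=\int_0^\infty t^{p-1}\eta(t)\bigl(t^{q-p}-t_\ast^{q-p}\bigr)\,dt,
\]
where $t_\ast$ is the sign-change point. On $[0,t_\ast]$ the factor $\eta$ is non-negative while $t^{q-p}-t_\ast^{q-p}$ is non-positive, and on $[t_\ast,\infty)$ both signs flip, so the integrand is pointwise $\leq 0$; this yields $\tilde T_q(x)^q\leq T_0^q=\tilde T_p(x)^q$. Equality throughout forces $\eta\equiv 0$, hence $\phi_x\equiv\psi_{T_x}$ for every admissible direction; assembling these equalities over $x$ gives $g(y)/g(0)=(1-\Vert y\Vert_L)^{1/\alpha}\chi_L(y)$ with $L=\supp g$, and in particular $\Vert g\Vert_\infty=g(0)$. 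The step I expect to be most delicate is the sign-change analysis: one has to handle carefully the possible relations between $\supp\phi_x$ and $[0,T_0]$ (including the boundary case $T_\phi=T_0$) and verify that concavity of $\phi_x^\alpha-(1-t/T_0)$ truly produces the advertised monotone sign pattern. Once that is secured, the moment comparison and the Beta-integral evaluation are routine.
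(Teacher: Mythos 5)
Your proposal is correct and follows essentially the same route as the paper: the paper's Lemma 3.1 is precisely your one-dimensional monotonicity statement for the normalized radial moments along each ray, proved by comparing with the same cone function $(1-t/T_0)_+^{1/\alpha}$ chosen to share the $p$-th moment, establishing $\textrm{supp}\,\phi_x\subseteq[0,T_0]$ by the same concavity contradiction, and concluding with the identical single-sign-change moment argument and equality analysis. No substantive differences to report.
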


We use the following notation: For any convex body $K$ containing the origin in its interior, $\chi_K$ will denote its characteristic function, $\Vert\cdot\Vert_K$ will denote its Minkowski functional, defined as $\Vert x\Vert_K=\inf\{\lambda>0\,:\,x\in\lambda K\}$ for every $x\in\R^n$ and $\rho_K$ will denote its radial function, which is defined for every $u\in S^{n-1}$, as $\rho_K(u):=\max\{\lambda>0\,:\,\lambda u\in K\}$ and is extended homogeneously to $\R^n$.  Let us recall that $\rho_K(x)=\frac{1}{\Vert x\Vert_K}$ for every $x\in\R^n\setminus\{0\}$ and that for any two convex bodies $K,L$ containing the origin in their interiors and any $\lambda>0$, $\rho_{\lambda K}(x)=\lambda\rho_K(x)$ for every $x\in\R^n$ and $K\subseteq L$ if and only if $\rho_K(x)\leq\rho_L(x)$ for every $x\in\R^n$.  $d\sigma$ will denote the uniform probability measure on $S^{n-1}$ and, given a function $g:\R^n\to\R$, $\textrm{supp}g$ will denote the support of $g$ and $\Vert g\Vert_\infty$ will stand for the $\ell_\infty$-norm, or the supremum norm. For any $x,y>0$, we recall the definition of generalized binomial coefficients in terms of Gamma functions as

$$ {x\choose y} := \frac{\Gamma{(1+x)}}{\Gamma{(1+y)}\Gamma{(1+x-y)}}. $$

The paper is structured as follows. In Section \ref{sec:Preliminaries} we will introduce some preliminary known results that we will use in order to prove Theorem \ref{thm:TeoremaPrincipal} and Theorem \ref{thm:inlcusioninversain}. In Section \ref{sec:InclusionRelations} we will prove the inclusion relations given by Theorem \ref{thm:inlcusioninversain}. Finally, in Section \ref{sec:MainResult}, we will prove Theorem \ref{thm:TeoremaPrincipal}.

\section{Preliminaries}\label{sec:Preliminaries}

In this section we will introduce the necessary concepts and already known results that we need in our proofs.

\subsection{Covariogram function}

Given a convex body $K \subset \R^n$, its covariogram function is the function defined as
$$
g_K(x) = \left|K \cap (x + K)\right| = \int _{\mathbb{R}^n} \chi_{K}(y) \chi _{x+K} (y) dy.
$$
It is easy to check that $\Vert g_K\Vert_\infty=g_K(0)=|K|$ , that the support of $g_K$ is the difference body
$$
K-K=\{x-y\in\R^n\,:\,x-y\in K\},
$$
and that
$$
\int _{\mathbb{R}^n} g_K(x) dx = \int _{\mathbb{R}^n} \int _{\mathbb{R}^n} \chi_{K}(y) \chi _{x+K} (y) dy dx=\int _{\mathbb{R}^n} \int _{\mathbb{R}^n} \chi_{K}(y) \chi _{y-K} (x) dx dy = |K|^2.
$$
Therefore, if $|K|=1$ then $g_K$ is a probability density. Besides, for any convex body  $K\subseteq\R^n$, $g_K$ is an even function: For every $x\in\R^n$ we have
$$
g_K(-x) = |K \cap (-x + K) | = |(x+K) \cap (x -x + K) | = |(x+K) \cap K | = g_K(x).
$$
Moreover, as a consequence of Brunn-Minkowski inequality (see, for instance, \cite[Theorem 7.1.1]{Sch}) $g_K$ is a $\frac{1}{n}$-concave function.  We will make use of the following lemma, which is a simplified version of \cite[Proposition 2.10]{AJV}:
\begin{lemma}\label{lem:simplexcaracterization}
Let $K\subseteq\R^n$ be a convex body. Then $K$ is an $n$-dimensional simplex if and only if for every $\theta\in [0,1]$
$$
(1-\theta^{1/n})(K-K)=\{x\in K-K\,:\, g_K(x)\geq\theta|K|\}.
$$
\end{lemma}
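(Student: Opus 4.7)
My approach is to reformulate the level-set identity as a pointwise formula for $g_K$ and then handle the two implications separately. For any $\theta \in [0,1]$, the identity $(1-\theta^{1/n})(K-K) = \{x \in K-K : g_K(x) \geq \theta|K|\}$ is equivalent to asserting that for each $x \in K-K$, $g_K(x) \geq \theta |K|$ exactly when $\|x\|_{K-K} \leq 1 - \theta^{1/n}$. Letting $\theta$ range over $[0,1]$ turns this into the pointwise formula
$$
g_K(x) = |K|(1 - \|x\|_{K-K})^n, \quad x \in K-K.
$$

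For the ``if'' direction, assume $K$ is an $n$-simplex. Both sides of the displayed identity are equivariant under affine transformations of $K$, so I may normalize to the standard simplex $\Delta = \{x \in \R^n : x_i \geq 0,\ \sum_i x_i \leq 1\}$. For $x \in \Delta - \Delta$ set $a_i = \max(x_i, 0)$ and $b_i = \max(-x_i, 0)$; by evenness of $g_\Delta$ I may assume $\sum_i b_i \geq \sum_i a_i$, i.e.\ $\sum_i x_i \leq 0$. Direct inspection of the inequalities $y_i \geq 0$, $y_i \geq x_i$, $\sum_i y_i \leq 1$, $\sum_i y_i \leq 1+\sum_i x_i$ defining $\Delta \cap (x+\Delta)$ shows it equals $a + (1 - \sum_i b_i)\Delta$, so $g_\Delta(x) = |\Delta|(1 - \sum_i b_i)^n$. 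Separately, computing $h_{\Delta-\Delta}(\theta) = h_\Delta(\theta) + h_\Delta(-\theta)$ yields $\|x\|_{\Delta - \Delta} = \max(\sum_i a_i, \sum_i b_i) = \sum_i b_i$, and the target formula follows.

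For the ``only if'' direction, assume the pointwise formula for $g_K$. Then $|\{g_K \geq \theta |K|\}| = (1 - \theta^{1/n})^n |K-K|$, and layer-cake integration gives
$$
|K|^2 = \int_{\R^n} g_K \, dx = |K||K-K| \int_0^1 (1-\theta^{1/n})^n \, d\theta = \frac{|K||K-K|}{\binom{2n}{n}},
$$
after evaluating the resulting Beta integral (the substitution $u = \theta^{1/n}$ yields $nB(n,n+1) = 1/\binom{2n}{n}$). Hence $|K-K| = \binom{2n}{n}|K|$, which is the equality case of the Rogers--Shephard inequality and is known to occur exactly for $n$-simplices.

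The principal obstacle is the explicit computation on the simplex: recognizing $\Delta \cap (x+\Delta)$ as a homothet of $\Delta$ with ratio $1 - \|x\|_{\Delta-\Delta}$, and verifying the matching formula $\|x\|_{\Delta - \Delta} = \max(\sum_i x_i^+, \sum_i x_i^-)$ for the Minkowski functional of the difference body. Once this explicit form is in hand, the converse is essentially one line via the equality case of Rogers--Shephard.
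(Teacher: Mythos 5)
Your proposal is correct. Note, however, that the paper does not prove this lemma at all: it is quoted as a simplified version of Proposition 2.10 in the cited work of Alonso-Guti\'errez, Jim\'enez and Villa, so there is no internal proof to compare against; your argument is a self-contained substitute, and it follows the standard route that underlies that reference. Your reduction of the family of level-set identities to the pointwise formula $g_K(x)=|K|\left(1-\Vert x\Vert_{K-K}\right)^n$ on $K-K$ is valid in both directions, the explicit computation $\Delta\cap(x+\Delta)=x^{+}+\left(1-\sum_i x_i^{-}\right)\Delta$ for $\sum_i x_i\leq 0$ is correct (and legitimate to transport to an arbitrary simplex by affine equivariance of both sides), and the converse via the layer-cake identity $|K|^2=\int g_K$, the Beta integral $\int_0^1(1-\theta^{1/n})^n d\theta=\binom{2n}{n}^{-1}$, and the equality case of the Rogers--Shephard inequality $|K-K|\leq\binom{2n}{n}|K|$ (equality exactly for simplices, which is in the cited paper of Rogers and Shephard) is sound. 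One small imprecision worth fixing: the Minkowski functional $\Vert\cdot\Vert_{\Delta-\Delta}$ is the support function of the polar body, not $h_{\Delta-\Delta}$ itself; the cleanest justification of $\Vert x\Vert_{\Delta-\Delta}=\max\left(\sum_i x_i^{+},\sum_i x_i^{-}\right)$ is the direct identification $\Delta-\Delta=\left\{x\in\R^n\,:\,\sum_i x_i^{+}\leq 1,\ \sum_i x_i^{-}\leq 1\right\}$, which follows by writing $x=x^{+}-x^{-}$ with $x^{+},x^{-}\in\Delta$. With that cosmetic repair, the proof is complete, and it has the added value of making the note self-contained where the paper relies on an external citation.
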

The following lemma is also well known (see \cite[Proposition 2.5.10]{BGVV}). Nevertheless, since it is one of the key steps in the proof of inequality \eqref{eq:Klartag} and in the proof of Theorem \ref{thm:TeoremaPrincipal}, we will give its proof for the sake of completeness.
\begin{lemma} \label{lem:covariogramInertia}
Let $K \subset \mathbb{R}^n$ be a centered convex body with $|K|=1$. Let $g_K(x) = |K \cap (x + K)|$ be the covariogram function. Then, $\forall \theta \in S^{n-1}$,
$$
\int _{\mathbb{R}^n} \langle x , \theta \rangle ^2 g_K(x) dx =
2 \int _K \langle x , \theta \rangle ^2 dx.
$$
\end{lemma}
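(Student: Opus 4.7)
The plan is to compute the left-hand side by writing $g_K$ as the integral defining it, applying Fubini's theorem, and performing a single linear change of variables; the centered hypothesis together with $|K|=1$ will then kill a cross term.

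First I would substitute the definition of the covariogram:
$$
\int_{\R^n}\langle x,\theta\rangle^2 g_K(x)\,dx=\int_{\R^n}\int_{\R^n}\langle x,\theta\rangle^2\,\chi_K(y)\,\chi_{x+K}(y)\,dy\,dx.
$$
Using $\chi_{x+K}(y)=\chi_K(y-x)$ and interchanging the order of integration (Fubini applies since the integrand is nonnegative and the measures are finite on the supports), I would make the change of variables $z=y-x$ for fixed $y$. This turns the region $\{\chi_K(y)\chi_K(y-x)=1\}$ into $\{y\in K,\,z\in K\}$ and replaces $x$ by $y-z$. The integral becomes
$$
\int_K\int_K\langle y-z,\theta\rangle^2\,dz\,dy.
$$

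Next I would expand the square:
$$
\langle y-z,\theta\rangle^2=\langle y,\theta\rangle^2-2\langle y,\theta\rangle\langle z,\theta\rangle+\langle z,\theta\rangle^2.
$$
Integrating termwise over $K\times K$ and using $|K|=1$, the first and third terms each give $\int_K\langle x,\theta\rangle^2\,dx$. For the cross term, the hypothesis that $K$ is centered (barycenter at the origin) yields $\int_K\langle y,\theta\rangle\,dy=0$, so
$$
\int_K\int_K\langle y,\theta\rangle\langle z,\theta\rangle\,dz\,dy=\Bigl(\int_K\langle y,\theta\rangle\,dy\Bigr)^2=0.
$$
Adding up the surviving contributions gives $2\int_K\langle x,\theta\rangle^2\,dx$, as required.

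There is no real obstacle here; the lemma is essentially a Fubini-plus-change-of-variables exercise, and the only nontrivial input is the vanishing of the cross term, which is exactly where the centered assumption is used. The normalization $|K|=1$ is only needed to make the diagonal terms come out cleanly; without it one would pick up an extra factor of $|K|$.
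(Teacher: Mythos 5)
Your proof is correct and follows essentially the same route as the paper's: substitute the definition of $g_K$, apply Fubini, change variables, expand the square, and use the centered hypothesis (together with $|K|=1$) to kill the cross term and normalize the diagonal terms. The only cosmetic difference is that you perform the substitution $z=y-x$ before expanding, while the paper expands $\langle y+(x-y),\theta\rangle^2$ first and substitutes afterwards; the computation is the same.
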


\begin{proof}

From the definition of $g_K$ we have
\begin{align*}
\int _{\mathbb{R}^n} \langle x , \theta \rangle ^2 g_K(x) dx & =
\int _{\mathbb{R}^n} \int _{\mathbb{R}^n} \langle x , \theta \rangle ^2  \chi_{K}(y) \chi _{x+K} (y) dx dy.
\end{align*}
Note that $y \in x + K \Leftrightarrow x \in y - K$. Also, using Fubini's theorem, we have that
\begin{align*}
&\int _{\mathbb{R}^n} \langle x , \theta \rangle ^2 g_K(x) dx  =
\int _{\mathbb{R}^n} \int _{\mathbb{R}^n} \langle x , \theta \rangle ^2 \chi_{K}(y) \chi _{y-K} (x) dx dy
\\
 & = \int _{\mathbb{R}^n} \int _{\mathbb{R}^n} \langle y + (x- y) , \theta \rangle ^2
 \chi_{K}(y) \chi _{y-K} (x) dx dy
\\
 & = \int _{\mathbb{R}^n} \int _{\mathbb{R}^n} \left( \langle y  , \theta \rangle ^2
 + 2 \langle y  , \theta \rangle \langle x- y  , \theta \rangle
 + \langle x-y  , \theta \rangle ^2 \right) \chi_{K}(y) \chi _{y-K} (x) dx dy.
\end{align*}
Splitting the latter integral in three integrals, and taking into account that $|K|=1$, we obtain
\begin{align*}
\int _{\mathbb{R}^n} \langle x , \theta \rangle ^2 g_K(x) dx & =  \int _{\mathbb{R}^n}  \langle y  , \theta \rangle ^2 \chi_{K}(y) dy |K|
\\
& + 2 \int _{\mathbb{R}^n} \int _{\mathbb{R}^n} \langle y  , \theta \rangle \langle x- y  , \theta \rangle
 \chi_{K}(y) \chi _{y-K} (x) dx dy
\\
 & + \int _{\mathbb{R}^n} \int _{\mathbb{R}^n}
 \langle x-y  , \theta \rangle ^2  \chi_{K}(y) \chi _{y-K} (x) dx dy.
\end{align*}
Finally, with the change of variables $x-y=z$ and taking into account that $|K|=1$ and that $K$ is centered, we have that
\begin{align*}
\int _{\mathbb{R}^n} \langle x , \theta \rangle ^2 g_K(x) dx & =  \int _K  \langle y  , \theta \rangle ^2  dy
 + 2 \int _{K} \int _{-K} \langle y  , \theta \rangle \langle z  , \theta \rangle  dz dy
+ \int _{K} \int _{-K} \langle z  , \theta \rangle ^2 dz dy
\\
 & =  \int _K  \langle y  , \theta \rangle ^2  dy + 0 +  \int _K  \langle z  , \theta \rangle ^2  dz = 2  \int _K  \langle x  , \theta \rangle ^2  dx.
\end{align*}
\end{proof}

\subsection{Ball's bodies}

Given a log-concave function  $g:\R^n\to[0,\infty)$ with $g(0)>0$ and $p>0$, we consider the following convex body, introduced by Ball in \cite{B}, where he also proved their convexity:
    $$
    K_p(g):=\left\{x\in\R^n\,:\,\int_0^\infty pt^{p-1}g(tx)dt\geq g(0)\right\}.
    $$
It is not difficult to see that their radial function is defined as
$$
\rho _{K_p(g)} (u) = \left( \frac{1}{g(0)} \int _{0} ^\infty p t^{p-1} g(tu)dt \right) ^{1/p},\quad u\in S^{n-1}.
$$
This family of convex bodies play an important role in the study of log-concave functions, as they allow us to construct convex bodies associated to log-concave functions preserving some of its properties. One can verify that if $g$ is an even function then, for every $p>0$ and every $u\in S^{n-1}$ $\rho _{K_p (g)} (-u)=\rho _{K_p (g)} (u)$. Therefore, if $g$ is an even log-concave function $K_p(g)$ is a centrally symmetric convex body for every $p>0$. The following lemma is also well known (see \cite[Proposition 2.5.3]{BGVV}). We include the proof here, as it provides another key step in the proof of \eqref{eq:Klartag} and Theorem \ref{thm:TeoremaPrincipal}.

\begin{lemma} \label{lem:IntegrationOnBallsBodies}
Let $g:\R^n \to [0, \infty)$ be a measurable function. Then, for every $\theta \in S^{n-1}$ and $p\geq0$ we have that
$$
\int _{K_{n+p}(g)} \left| \langle x , \theta \rangle \right|^p dx =
 \frac{1}{g(0)} \int _{\mathbb{R}^n} \left| \langle x , \theta \rangle \right|^p g(x) dx.
$$
\end{lemma}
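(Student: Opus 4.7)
The plan is to evaluate both sides by passing to polar coordinates and identifying the radial integrals.

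First, I would integrate over $K_{n+p}(g)$ using polar coordinates $x = ru$ with $r > 0$ and $u \in S^{n-1}$. Writing the integration in terms of the (unnormalized) surface measure $du$ on $S^{n-1}$, the left-hand side becomes
$$
\int_{S^{n-1}} |\langle u,\theta\rangle|^p \int_0^{\rho_{K_{n+p}(g)}(u)} r^{n+p-1}\,dr\,du
= \int_{S^{n-1}} |\langle u,\theta\rangle|^p\, \frac{\rho_{K_{n+p}(g)}(u)^{n+p}}{n+p}\,du.
$$

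Next I would substitute the explicit formula for the radial function, namely
$$
\rho_{K_{n+p}(g)}(u)^{n+p} = \frac{1}{g(0)}\int_0^\infty (n+p)\,t^{n+p-1} g(tu)\,dt,
$$
which is recorded right after the definition of $K_p(g)$. Plugging this in cancels the factor $n+p$ and yields
$$
\int_{K_{n+p}(g)} |\langle x,\theta\rangle|^p\,dx = \frac{1}{g(0)}\int_{S^{n-1}}\int_0^\infty |\langle u,\theta\rangle|^p\, t^{n+p-1} g(tu)\,dt\,du.
$$

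Finally, I would absorb a factor $t^p$ into $|\langle u,\theta\rangle|^p = t^{-p}|\langle tu,\theta\rangle|^p$, producing $t^{n-1}$ as the radial Jacobian, and revert from polar back to Cartesian coordinates via the substitution $x = tu$. This identifies the right-hand side as $\frac{1}{g(0)}\int_{\R^n}|\langle x,\theta\rangle|^p g(x)\,dx$, as required. No step is a real obstacle; everything hinges on applying Fubini and recognizing that the exponent $n+p$ in the body's definition is chosen precisely so that the $r$-integration produces the right power for restoring the Lebesgue measure on $\R^n$.
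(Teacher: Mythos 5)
Your proposal is correct and follows essentially the same route as the paper: integrate the left-hand side in polar coordinates, insert the explicit expression for $\rho_{K_{n+p}(g)}$ to cancel the factor $n+p$, and revert to Cartesian coordinates to recover $\frac{1}{g(0)}\int_{\R^n}|\langle x,\theta\rangle|^p g(x)\,dx$. The only difference is cosmetic (you use the unnormalized surface measure $du$ instead of $n|B_2^n|\,d\sigma$), so there is nothing to add.
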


\begin{proof}
By integrating in polar coordinates, we have that for any $p\geq0$ and any $\theta \in S^{n-1}$,
\begin{align*}
\int _{K_{n+p}(g)} \left| \langle x , \theta \rangle \right|^p dx & =
n |B^n _2|\int_{S^{n-1}} \int _0 ^{\rho _{K_{n+p}(g)}(u)}  \left| \langle ru , \theta \rangle \right|^p r^{n-1} dr
d \sigma (u)
\\
 & =  n |B^n _2|\int_{S^{n-1}} \left| \langle u , \theta \rangle \right|^p   \int _0 ^{\rho _{K_{n+p}(g)}(u)}  r^{n+p-1} dr d \sigma (u)
\\
 & = n |B^n _2| \int_{S^{n-1}} \left| \langle u , \theta \rangle \right|^p  \frac{\left(\rho _{K_{n+p}(g)}(u)\right)^{n+p}}{n+p}
   d \sigma (u).
\end{align*}
Using the expression of the radial function of $K_{n+p} (g)$ and integration in polar coordinates again we obtain
\begin{align*}
\int _{K_{n+p}(g)} \left| \langle x , \theta \rangle \right|^p dx & =
 n |B^n _2|\int_{S^{n-1}} \left| \langle u , \theta \rangle \right|^p
\frac{1}{g(0)} \int _0 ^{\infty} s^{n+p-1} g(su) ds d \sigma (u)
\\
 & = n \frac{|B^n _2|}{g(0)} \int_{S^{n-1}} \int _0 ^{\infty} s^{n-1}
 \left| \langle su , \theta \rangle \right|^p g(su) ds   d \sigma (u)
\\
& = \frac{1}{g(0)} \int _{\mathbb{R}^n} \left| \langle x , \theta \rangle \right|^p g(x) dx .
\end{align*}
\end{proof}

Finally, the following inclusion relation between different convex bodies in the family $(K_p(g))_{p>0}$ is also known (see \cite[Proposition 2.5.7]{BGVV}).  If $g: \mathbb{R}^n \to [0, \infty )$ is a log-concave function with $g(0) = \Vert g \Vert _{\infty}$ and $0<p<q$, then
\begin{equation}\label{eq:ContenidosBall}
\frac{\Gamma (1+p)^{1/p}}{\Gamma (1+q)^{1/q}} K_q (g) \subseteq  K_p (g)  \subseteq K_q(g).
\end{equation}

Let us point out that the first inclusion, which is the one used in the proof of \eqref{eq:Klartag}, does not need the assumption $g(0) = \Vert g \Vert _{\infty}$. Theorem \ref{thm:inlcusioninversain}, which will be proved in the next section, will improve the first inclusion relation whenever $g$ is $\alpha$-concave for some $\alpha>0$, leading to the obtention of the constant $D_n$ in Theorem \ref{thm:TeoremaPrincipal}.

\section{Inclusion relations between Ball Bodies}\label{sec:InclusionRelations}

In this section we are going to prove Theorem \ref{thm:inlcusioninversain}. It will be a direct consequence of the following lemma:

\begin{lemma}
\label{riptus}
Let $g:[0, \infty ) \to [0, \infty)$ be an integrable $\alpha$-concave function, with $g(0)>0$. Then, the function
\begin{align*}
G_g(p) = \left(  { \frac{1}{\alpha} + p \choose \frac{1}{\alpha}} \frac{1}{g(0)}  \int _0 ^{\infty} p t^{p-1} g(t) dt \right)^{1/p}
\end{align*}
is decreasing in $p\in(0,\infty)$. Furthermore, if there exist $0<p<q$ such that $G_g(p)=G_g(q)$ then there exists $M>0$ such that
\begin{gather*} \frac{g(t)}{g(0)} \; = \;
\begin{cases}
\left( 1 - \frac{t}{M} \right)^{1/\alpha} \; , \; \; \text{if }  t \in [0,M] \\
 \\
0 \; , \; \; \text{if } t \in (M, + \infty )
\end{cases}
\end{gather*}
\end{lemma}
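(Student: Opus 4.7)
The plan is to prove monotonicity by a one-sign-change comparison against the family of extremals. Writing $u := g/g(0)$, so that $u(0)=1$ and $u$ is $\alpha$-concave, the first step is to compute, by the substitution $t=Ms$ and a Beta integral, that for $u_M(t):=(1-t/M)_+^{1/\alpha}$ one has
$$ \int_0^\infty p\, t^{p-1} u_M(t)\, dt \;=\; \frac{M^p}{\binom{1/\alpha+p}{1/\alpha}}, $$
so $G_{g_M}(p)\equiv M$ is constant in $p$. This identifies the extremal functions and sets up the strategy: fix $0<p<q$ and put $M:=G_g(q)$. The very definition of $M$ is the moment identity $\int_0^\infty t^{q-1}(u-u_M)\, dt = 0$, and $G_g(p)\geq M$ is equivalent to $\int_0^\infty t^{p-1}(u-u_M)\, dt \geq 0$; so the whole lemma reduces to this one comparison.

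The next step is the key structural observation: $\delta:=u-u_M$ changes sign at most once on $[0,\infty)$, going from nonnegative to nonpositive. On the intersection of the supports of $u$ and $u_M$, the function $u^\alpha - u_M^\alpha$ is concave (concave minus linear) and vanishes at $0$; a chord argument using its value at the right endpoint shows that on $[0,M]\cap \mathrm{supp}\,u$ it has at most one sign change, from $+$ to $-$. Outside the common support, one of the two functions vanishes identically, so the sign of $\delta$ there is trivial. A short case analysis according to whether $\mathrm{supp}\,u\subseteq [0,M]$ or not then yields the one-sign-change property; in the alternative case one gets $\delta\geq 0$ everywhere, and the moment identity already forces $u=u_M$, reducing to the equality case.

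With the sign-change point $t_0\in(0,M]$ in hand, the conclusion is a short weighting argument. Since $p<q$, the map $t\mapsto t^{p-q}$ is decreasing, hence $t^{p-q}\geq t_0^{p-q}$ on $[0,t_0]$ (where $\delta\geq 0$) and $t^{p-q}\leq t_0^{p-q}$ on $[t_0,\infty)$ (where $\delta\leq 0$); in both regions $t^{p-q}\delta(t)\geq t_0^{p-q}\delta(t)$. Integrating against $t^{q-1}$ and using the moment identity,
$$ \int_0^\infty t^{p-1}\delta(t)\, dt \;=\; \int_0^\infty t^{p-q}\cdot t^{q-1}\delta(t)\, dt \;\geq\; t_0^{p-q}\int_0^\infty t^{q-1}\delta(t)\, dt \;=\; 0, $$
which is exactly the inequality sought. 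For the equality statement, strict monotonicity of $t^{p-q}$ on each side of $t_0$ forces $\delta\equiv 0$ a.e., so $u=u_M$, giving the stated form of $g$.

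The step I expect to require the most care is the sign-change analysis itself: one has to verify the concavity of $u^\alpha - u_M^\alpha$ on the appropriate subinterval (noting that the support of $u$ may be $[0,M_u]$ with $M_u$ smaller, larger, or infinite), then transfer the sign pattern to $\delta$ via monotonicity of $x\mapsto x^{1/\alpha}$, and finally check that the portions of $[0,\infty)$ where one of the two functions vanishes preserve and do not disturb the single-crossing behaviour.
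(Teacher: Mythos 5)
Your proposal is correct and is essentially the paper's own argument: both compare $g$ (normalized so $g(0)=1$) with the extremal profile $(1-t/M)_+^{1/\alpha}$ calibrated to match one moment, use concavity of $g^{\alpha}$ minus an affine function to get a single sign change, and then transfer the zero moment to the other exponent by the $t^{\pm(q-p)}$ weighting trick, including the same treatment of the equality case. The only (immaterial) difference is that you calibrate $M=G_g(q)$ and prove $\int t^{p-1}(u-u_M)\,dt\ge 0$, whereas the paper calibrates $M_1=G_g(p)$ and proves the mirror inequality at $q$, disposing of the case $\mathrm{supp}\,u\not\subseteq[0,M_1]$ by a separate short contradiction rather than folding it into the sign analysis as you do.
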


\begin{rmk}
The proof of this lemma follows the idea of the proof of Berwald's inequality (see \cite{Be}, and also \cite{AAGJV} for an English translation). However, in the proof of Berwald's inequality, only the $\frac{1}{n}$-concave function given by the super-level sets of a concave function was considered. In this lemma we consider any 1-dimensional $\alpha$-concave function with $\alpha>0$. This will allow us to prove the aforementioned inclusion relation between Ball bodies.
\end{rmk}
\begin{proof}

Without loss of generality, we can assume that $g(0) = 1$. If $g(0)$ is not $1$, we apply the same argument with $g_1 = \frac{g}{g(0)}$, which satisfies that $g_1(0) = 1$ and is also integrable and $\alpha$-concave.

By definition, as $g$ is an $\alpha$-concave function, it follows that $g^{\alpha}$ is a concave function on its support, which is the same as the support of $g$. Besides, since $g$ is integrable and $\alpha$-concave, it has compact support. We define
\begin{align*}
M =\sup  \left\lbrace t \in [0, \infty ) \; ; \; g(t) \neq 0 \right\rbrace .
\end{align*}
Then, it is clear that for every $p>0$
\begin{align*}
G_g(p) = \left(  { \frac{1}{\alpha} + p \choose \frac{1}{\alpha}}  \int _0 ^{\infty} p t^{p-1} g(t) dt \right)^{1/p}
= \left(  { \frac{1}{\alpha} + p \choose \frac{1}{\alpha}}  \int _0 ^{M} p t^{p-1} g(t) dt \right)^{1/p} .
\end{align*}

Given a constant $M_1>0$, we define $h(t)$ as
\begin{gather*} h(t) \; = \;
\begin{cases}
\left( 1 - \frac{t}{M_1} \right)^{1/\alpha} \; , \; \; \text{if }  t \in [0,M_1] \\
 \\
0 \; , \; \; \text{if } t \in (M_1, + \infty )
\end{cases}
\end{gather*}
Note that $h^{\alpha}(t)$ is affine on its support, so it is also concave. Then, $h$ is an $\alpha$-concave function.
%\begin{align*}
%g(t) = \left( 1 - \frac{t}{M_1} \right)^{1/\alpha} .
%\end{align*}
%%%%%%%%%%%%%%% LEEEEEEERRRRR
%%%%%%%%%%%%(o poner con definiciÃ³n por partes, que es lo de arriba e 0, M1 y 0 fuera, luego es una funciÃ³n de soporte compacto (0, M1)).
Changing variables  $t=M_1x$, it follows that
\begin{align*}
G_{h}(p) ^p & =  { \frac{1}{\alpha} + p \choose \frac{1}{\alpha}}  \int _0 ^{\infty} p t^{p-1} h(t) dt = { \frac{1}{\alpha} + p \choose \frac{1}{\alpha}}  \int _0 ^{M_1} p t^{p-1} \left( 1 - \frac{t}{M_1} \right)^{1/\alpha}  dt
\\
 & = { \frac{1}{\alpha} + p \choose \frac{1}{\alpha}}
 \frac{1}{M_1^{1 / \alpha}}  \int _0 ^{M_1} p t^{p-1} \left( M_1 - t \right)^{1/\alpha}  dt
\\
 & = { \frac{1}{\alpha} + p \choose \frac{1}{\alpha}}
 \frac{1}{M_1^{1 / \alpha}}  \int _0 ^{1} p (M_1x)^{p-1} \left( M_1 - M_1x \right)^{1/\alpha}  dx M_1
\\
 %& = { \frac{1}{\alpha} + p \choose \frac{1}{\alpha}}
 %\frac{1}{M_1^{1 / \alpha}}  \int _0 ^{1} p M_1^{p-1} x^{p-1} \left( 1 - x %\right)^{1/\alpha} (M_1)^{1/\alpha}   dx M_1
%\\
 & = { \frac{1}{\alpha} + p \choose \frac{1}{\alpha}}
 M_1^p  p \int _0 ^{1}   x^{p-1} \left( 1 - x \right)^{1/\alpha}  dx
\\
 & = { \frac{1}{\alpha} + p \choose \frac{1}{\alpha}}
 M_1^p p
 \left(  \frac{\Gamma(p) \Gamma(1/ \alpha + 1) }{\Gamma(p + 1/ \alpha + 1)}   \right)
\\
 & = \left(  \frac{\Gamma(p + 1/ \alpha + 1)}{\Gamma(1/ \alpha + 1)
 \Gamma(p+1)}   \right) M_1^p  p
 \left(  \frac{\Gamma(p) \Gamma(1/ \alpha + 1) }{\Gamma(p + 1/ \alpha + 1)}   \right)
\\
 & = M_1^p .
\end{align*}
Then, it is clear that $G_h(p)=M_1$ for every $p>0$. Let us now take $0<p<q$ and fix
$$
M_1 = G_g(p).
$$
The equality $G_h(p) = M_1=G_g(p)$ means that
\begin{align*}
\left(  { \frac{1}{\alpha} + p \choose \frac{1}{\alpha}}  \int _0 ^{\infty} p t^{p-1} g(t) dt \right)^{1/p}
= \left(  { \frac{1}{\alpha} + p \choose \frac{1}{\alpha}}  \int _0 ^{\infty} p t^{p-1} h(t) dt \right)^{1/p} .
\end{align*}
Equivalently,
\begin{align}
\label{equivalentementeee}
\int _0 ^{\infty} p t^{p-1} g(t) dt
= \int _0 ^{\infty} p t^{p-1} h(t) dt.
\end{align}

Notice that necessarily $M\leq M_1$. Otherwise, if $M>M_1$, we would have that since $g^\alpha$ is concave on $[0,M]$ with $g(0)=1$, for every $t\in(0,M]$
$$
h^\alpha(t)<\left( 1 - \frac{t}{M} \right)\leq g^\alpha(t)
$$
and then
$$
M_1^p=G_g(p)^p={ \frac{1}{\alpha} + p \choose \frac{1}{\alpha}}  \int _0 ^{M} p t^{p-1} g(t) dt>{ \frac{1}{\alpha} + p \choose \frac{1}{\alpha}}  \int _0 ^{M_1} p t^{p-1} h(t) dt=M_1^p,
$$
which is a contradiction. Moreover, since the two integrals in \eqref{equivalentementeee} are equal and $g(0)=h(0)=1$, it cannot happen that $ h(t) < g(t)$ for every $t \in (0, +\infty)$. Thus, there exists some $t \in (0, +\infty)$ for which $h(t) \geq g(t)$. Then, the set $\lbrace t>0 : h(t) \geq g(t) \rbrace$ is non-empty. We take the infimum of this set
$$
t_0 = \inf \left\lbrace t>0 : \left( 1- \frac{t}{M_1} \right)^{1/\alpha} \geq g(t) \right\rbrace.
$$
By definition of $t_0$, it is clear that
$$
h(t) < g(t), \; \; \forall t \in (0, t_0).
$$
Let us see that $h(t) \geq g(t)$ for every $t > t_0$. On the one hand, it is clear that for every $t>M$ we have that $h(t) \geq g(t) = 0$. On the other hand, if $t_0<M$ then for any $t_0<y\leq M$, we take $\lambda >0$ such that $t_0 = \lambda 0 + (1 - \lambda ) y$. Then, as $g^\alpha$ is concave on $[0,M]$ and $h^\alpha$ is affine on $[0,M_1]$, we have that
\begin{align*}
 & g^\alpha(t_0) =g^\alpha(\lambda 0 + (1 - \lambda ) y) \geq \lambda g^\alpha(0) + (1- \lambda)g^\alpha(y)
\\
 & h^\alpha(t_0) = h^\alpha(\lambda 0 + (1 - \lambda ) y) = \lambda h^\alpha(0) + (1- \lambda)h^\alpha(y).
\end{align*}
From the definition of $t_0$ and the continuity of $g$ and $h$, since concave functions are continuous on the interior of their supports, $g(t_0) = h(t_0)$. Then, it follows that
\begin{align*}
\lambda h^\alpha(0) + (1- \lambda)h^\alpha(y) \geq \lambda g^\alpha(0) + (1- \lambda)g^\alpha(y).
\end{align*}
As $g(0) = h(0) = 1$,
\begin{align*}
h(y) \geq g(y).
\end{align*}
Then, for every $y \in (t_0 , M)$ we have that $h(y) \geq g(y)$.

Rewriting equality \eqref{equivalentementeee}, we have that
\begin{align*}
\frac{1}{{ \frac{1}{\alpha} + p \choose \frac{1}{\alpha}}} \left( G_g(p)^p - G_h(p)^p \right) = \int _0 ^{\infty}p t^{p-1} (g(t) - h(t) ) dt = 0.
\end{align*}
Thus, it follows that
\begin{align}
\label{berwaldacot}
\int _0 ^{t_0}p t^{p-1} (g(t) - h(t) ) dt -
\int _{t_0} ^{\infty}p t^{p-1} (h(t) - g(t) ) dt = 0.
\end{align}
Then, for every $q >p$ we have that
\begin{align*}
\frac{G_g(q)^q - G_h(q)^q}{{ \frac{1}{\alpha} + q \choose \frac{1}{\alpha}}}
 &  = \int _0 ^{\infty}q t^{q-1} (g(t) - h(t) ) dt
\\
 & = \int _0 ^{t_0}q t^{q-1} (g(t) - h(t) ) dt
  - \int _{t_0} ^{\infty} q t^{q-1} (h(t) - g(t) ) dt
\\
 & = \frac{q}{p} \left(
 \int _0 ^{t_0} p t^{p-1} t^{q-p} (g(t) - h(t) ) dt
 - \int _{t_0} ^{\infty} p t^{p-1} t^{q-p} (h(t) - g(t) ) dt \right)
\\
 & \leq \frac{q}{p} t_0^{q-p} \left(
 \int _0 ^{t_0} p t^{p-1} (g(t) - h(t) ) dt
 - \int _{t_0} ^{\infty} p t^{p-1} (h(t) - g(t) ) dt \right).
\end{align*}
Using equality \ref{berwaldacot}, it is clear that
\begin{align*}
\frac{G_g(q)^q - G_h(q)^q}{{ \frac{1}{\alpha} + q \choose \frac{1}{\alpha}}}
& \leq \frac{q}{p} t_0^{q-p} \left(
 \int _0 ^{t_0} p t^{p-1} (g(t) - h(t) ) dt
 - \int _{t_0} ^{\infty} p t^{p-1} (h(t) - g(t) ) dt \right)
\\
 & = \frac{q}{p} t_0^{q-p} \cdot 0 = 0.
\end{align*}
So, we have that for every $0<p<q$,
\begin{align*}
G_g(q) - G_h(q) \leq 0.
\end{align*}
From the definition of $h$, it satisfies that $G_h(q) = G_h(p) = G_g(p)$. So, we can conclude that for every $0<p<q$,
\begin{align*}
G_g(q) - G_g(p) \leq 0.
\end{align*}
and its proved that the function $G_g(p)$ is decreasing in $p \in (0, \infty )$.

Besides, if there exist $0<p<q$ such that $G_g(p)=G_g(q)$, then, defining $h$ as before, we have that necessarily $g(t)=h(t)$ for every $t>0$.
\end{proof}

We can obtain now the proof of Theorem \ref{thm:inlcusioninversain}.

\begin{proof}[Proof of Theorem \ref{thm:inlcusioninversain}]
Let $g:\R^n\to[0,\infty)$ be a continuous integrable $\alpha$-concave function with $g(0)>0$ and let $u\in S^{n-1}$. Calling $g_u:[0,\infty)\to[0,\infty)$ the integrable $\alpha$-concave function $g_u(t)=g(tu)$ we obtain that for every $p>0$
$$
{ \frac{1}{\alpha} + p \choose \frac{1}{\alpha}}^{1/p}
\rho _{K_p(g)} (u)  = \left( { \frac{1}{\alpha} + p \choose \frac{1}{\alpha}} \frac{1}{g(0)} \int _0 ^{\infty} p t^{p-1} g(tu) dt \right)^{1/p}
= G_{g_u}(p),
$$
where $G_{g_u}(p)$ is defined, as in the previous lemma, as
$$
G_{g_u}(p) = \left(  { \frac{1}{\alpha} + p \choose \frac{1}{\alpha}} \frac{1}{g_u(0)}  \int _0 ^{\infty} p t^{p-1} g_u(t) dt \right)^{1/p}.
$$
Since, by Lemma \ref{riptus}, $G_{g_u}(p)$ is decreasing in $p\in(0,\infty)$ we have that for every $0<p<q$ and for every $u\in S^{n-1}$
$$
{\frac{1}{\alpha} + q \choose \frac{1}{\alpha}}^{1/q}
\rho _{K_q(g)} (u)=G_{g_u}(q) \leq G_{g_u}(p)={ \frac{1}{\alpha} + p \choose \frac{1}{\alpha}}^{1/p}
\rho _{K_p(g)} (u).
$$
Therefore,
$$
 { \frac{1}{\alpha} + q \choose \frac{1}{\alpha}}^{1/q} K_q (g) \subseteq { \frac{1}{\alpha} + p \choose \frac{1}{\alpha}}^{1/p} K_p (g),
$$
which proves the inclusion. Assume now that there exist $0<p<q$ such that the above inclusion is an equality. Then, for every $u\in S^{n-1}$ we have that
$$
G_{g_u}(q) = G_{g_u}(p)
$$
and, by the equality case in Lemma \ref{riptus}, we have that for every $u\in S^{n-1}$ there exists $M_u>0$ such that
\begin{gather*} \frac{g_u(t)}{g(0)} \; = \;  \frac{g_u(t)}{g_u(0)} \; = \;
\begin{cases}
\left( 1 - \frac{t}{M_u} \right)^{1/\alpha} \; , \; \; \text{if }  t \in [0,M_u] \\
 \\
0 \; , \; \; \text{if } t \in (M_u, + \infty )
\end{cases}
\end{gather*}
Then, necessarily, if $L=\textrm{supp}g$, we have that $M_u=\rho_L(u)=\frac{1}{\Vert u\Vert_L}$ and then for every $u\in S^{n-1}$ and every $t\in[0,\infty)$
\begin{gather*} \frac{g(tu)}{g(0)} \; = \;
\begin{cases}
\left( 1 - \Vert tu\Vert_L \right)^{1/\alpha} \; , \; \; \text{if }  tu \in L \\
 \\
0 \; , \; \; \text{if } tu \not\in L
\end{cases}
\end{gather*}
Equivalently, $\frac{g(x)}{g(0)}=(1-\Vert x\Vert_L)^{1/\alpha}\chi_L(x)$ for every $x\in\R^n$.
\end{proof}

\section{Proof of the main result}\label{sec:MainResult}

In this section we are going to prove Theorem \ref{thm:TeoremaPrincipal}. For that matter, we take $K\subseteq\R^n$ an isotropic convex body and we proceed as in the proof of inequality \eqref{eq:Klartag}: We consider the Ball body $K_{n+2}(g_K)$, where $g_K$ denotes the covariogram function, and observe that $T:=|K_{n+2}(g_K)|^{-1/n}K_{n+2}(g_K)$ has volume 1, it is centrally symmetric, as $g_K$ is an even function, and by Lemma \ref{lem:IntegrationOnBallsBodies} with $p=2$ and Lemma \ref{lem:covariogramInertia}, for every $\theta\in S^{n-1}$
\begin{eqnarray*}
\int_T\langle x,\theta\rangle^2dx&=&\frac{1}{|K_{n+2}(g_K)|^{1+2/n}}\int_{K_{n+2}(g_K)}\langle x,\theta\rangle^2dx=\frac{1}{|K_{n+2}(g_K)|^{1+2/n}}\int_{\R^n}\langle x,\theta\rangle^2g_K(x)dx\cr
&=&\frac{2}{|K_{n+2}(g_K)|^{1+2/n}}\int_{K}\langle x,\theta\rangle^2dx=\frac{2L_K^2}{|K_{n+2}(g_K)|^{1+2/n}}.
\end{eqnarray*}
Therefore, $T$ is isotropic with
\begin{equation}\label{eq:identityIsotropicConstants}
L^2_{K_{n+2}(g_K)}=L_T^2=\frac{2L_K^2}{|K_{n+2}(g_K)|^{1+2/n}}
\end{equation}
Observing now that $g_K$ is a $\frac{1}{n}$-concave function, we have, by Theorem \ref{thm:inlcusioninversain}, that
\begin{equation}\label{eq:InclusionEnPrueba}
{{2n+2}\choose{n}}^{1/(n+2)}K_{n+2}(g_K)\subseteq{{2n}\choose{n}}^{1/n}K_n(g_K)
\end{equation}
and, taking volumes and taking into account that by Lemma \ref{lem:IntegrationOnBallsBodies} with $p=0$ we have that $|K_n(g_K)|=\int_{\R^n}g_K(x)dx=1$ we obtain that
$$
|K_{n+2}(g_K)|\leq\frac{{{2n}\choose{n}}}{{{2n+2}\choose{n}}^{n/(n+2)}},
$$
which, together with \eqref{eq:identityIsotropicConstants}, yields
$$
L^2_{K_{n+2}}(g_K)\geq 2\frac{{{2n+2}\choose{n}}}{{{2n}\choose{n}}^{1+2/n}}=\frac{L_K^2}{D_n^2},
$$
which is the inequality in Theorem \ref{thm:TeoremaPrincipal}. If there is equality in the above inequality we have that \eqref{eq:InclusionEnPrueba} is an equality. Therefore, by the characterization of the equality in Theorem \ref{thm:inlcusioninversain}, we have that
$$
g_K(x)=(1-\Vert x\Vert_{K-K})^n\chi_{K-K}(x)\quad\forall x\in\R^n.
$$
Therefore, for every $\theta\in[0,1]$ we have that
$$
(1-\theta^{1/n})(K-K)=\{x\in K-K\,:\, g_K(x)\geq\theta|K|\}.
$$
and, by Lemma \ref{lem:simplexcaracterization}, $K$ is an $n$-dimensional simplex. Since we are assuming that $K$ is isotropic, then $K$ is the $n$-dimensional regular simplex with volume 1.

Finally, let us prove that $\displaystyle{\sup_{n\in\N}D_n=\sqrt{2}}$ or, equivalently, that
\begin{equation}\label{eq:infimoAProbar}
\inf \left\lbrace  2 \frac{{2n + 2\choose n}}{ {2n\choose n}^{1+2/n} }\,:\, n \in \mathbb{N} \right\rbrace = \frac{1}{2}.
\end{equation}

Using Stirling's formula, one can check that %$\displaystyle{\lim _{n \to \infty} D_n ^2 = \frac{1}{2}}$:
$$
\lim _{n \to \infty} 2 \frac{{2n + 2\choose n}}{ {2n\choose n}^{1+2/n} }
= \lim _{n \to \infty} 2 \frac{\Gamma(2n+3)}{\Gamma(n+3)\Gamma(n+1)} \cdot
\left( \frac{\Gamma(n+1)\Gamma(n+1)}{\Gamma(2n+1)} \right)^{1+2/n}
 =  \frac{1}{2}.
$$
Therefore, in order to prove \eqref{eq:infimoAProbar}, it is enough to prove that for every $n\geq 1$
\begin{equation}\label{eq:ecuacionAProbar}
2 \frac{{2n + 2\choose n}}{ {2n\choose n}^{1+2/n} }\geq \frac{1}{2}.
\end{equation}

We will prove \eqref{eq:ecuacionAProbar} by induction. In order to prove the induction step, we will split the proof in two lemmas. In the first lemma we will prove an auxiliary inequality that we will need in order to prove the induction step. In the second lemma we will actually prove the induction step:

\begin{lemma}
\label{mayorque1}
For every $n \in \N$,
\begin{align*}
\frac{4n}{n+2} \left( \frac{n (n+1)}{(n-1)(n+2)} \right)^{n-1} \frac{n^2}{(2n-1)^2} \geq 1
\end{align*}
\end{lemma}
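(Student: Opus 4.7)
The plan is to apply Bernoulli's inequality to the middle factor, reducing the claim to an elementary polynomial inequality. First I would rewrite
$$\frac{n(n+1)}{(n-1)(n+2)}=1+\frac{2}{(n-1)(n+2)},$$
so that for $n\geq 2$ the exponent $n-1\geq 1$ and the base exceeds $1$; Bernoulli's inequality $(1+x)^r\geq 1+rx$ (valid for $x\geq 0$, $r\geq 1$) then yields
$$\left(\frac{n(n+1)}{(n-1)(n+2)}\right)^{n-1}\geq 1+\frac{2(n-1)}{(n-1)(n+2)}=\frac{n+4}{n+2}.$$

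With this lower bound in hand it suffices to verify
$$\frac{4n}{n+2}\cdot\frac{n+4}{n+2}\cdot\frac{n^2}{(2n-1)^2}=\frac{4n^3(n+4)}{(n+2)^2(2n-1)^2}\geq 1,$$
which, after clearing denominators, is the polynomial inequality $4n^3(n+4)\geq (n+2)^2(2n-1)^2$. Expanding $(n+2)^2(2n-1)^2=4n^4+12n^3+n^2-12n+4$ and $4n^3(n+4)=4n^4+16n^3$, this simplifies to
$$4n^3-n^2+12n-4\geq 0,$$
and I would close the argument by observing that at $n=1$ the polynomial takes the value $11$, while its derivative $12n^2-2n+12$ has negative discriminant and is therefore strictly positive, so the polynomial is increasing on $[1,\infty)$.

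The case $n=1$ (if $\mathbb{N}$ is understood to include it) has to be handled separately, since the middle factor there involves $(n-1)$ in a denominator but carries exponent $0$; using the convention $x^0=1$, the left-hand side evaluates to $\tfrac{4}{3}\geq 1$, so the statement still holds.

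The main obstacle I foresee is ensuring that the Bernoulli lower bound is tight enough for the bound $(n+4)/(n+2)$ to still imply the original inequality: a quick asymptotic check gives $\bigl(\tfrac{n(n+1)}{(n-1)(n+2)}\bigr)^{n-1}=1+\tfrac{2}{n}+O(n^{-2})$, matching $\tfrac{n+4}{n+2}=1+\tfrac{2}{n+2}$ to leading order in $1/n$, so no precision is lost at the order that matters; the remaining polynomial inequality then holds with room to spare.
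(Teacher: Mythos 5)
Your proof is correct, and it takes a somewhat different route from the paper's. The paper also reduces the claim to the form $\left(1+\frac{2}{n^2+n-2}\right)^{n-1}\geq 1+\frac{4n^2-7n+2}{4n^3}$, but then passes to logarithms and sandwiches with the bounds $\frac{x}{1+x}\leq\log(1+x)\leq x$, ending at the cubic inequality $4n^3-5n^2+5n-2\geq 0$; you instead apply Bernoulli's inequality $(1+x)^{n-1}\geq 1+(n-1)x$ directly to the middle factor, getting the clean lower bound $\frac{n+4}{n+2}$ and the cubic $4n^3-n^2+12n-4\geq 0$ after clearing denominators (your expansions check out). The two arguments are close in spirit -- both replace the $(n-1)$-st power of $1+\text{(small)}$ by a linear lower bound and finish with an elementary cubic -- but yours avoids logarithms altogether and is slightly shorter; for integer exponents Bernoulli is just the binomial theorem, so the argument is fully elementary. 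A small additional merit of your write-up is the explicit treatment of $n=1$, where the factor $\left(\frac{n(n+1)}{(n-1)(n+2)}\right)^{n-1}$ is formally of the shape $(\cdot/0)^0$; the paper's chain of equivalences also divides by $n^2+n-2$, which vanishes there, so your separate verification that the left-hand side equals $\frac{4}{3}$ under the convention $x^0=1$ (and the observation that the lemma is only needed for $n\geq 2$ in the induction) tidies up a point the paper leaves implicit.
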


\begin{proof}

Notice that for every $n\in\N$
\begin{align*}
\frac{4n}{n+2} \left( \frac{n (n+1)}{(n-1)(n+2)} \right)^{n-1} \frac{n^2}{(2n-1)^2} & \geq 1
\\
 \Leftrightarrow \left( \frac{n (n+1)}{(n-1)(n+2)} \right)^{n-1} & \geq  \frac{(2n-1)^2}{n^2} \frac{(n+2)}{4n}
\\
 \Leftrightarrow \left(1+ \frac{2}{n^2+n-2} \right)^{n-1} & \geq 1+ \frac{4n^2-7n+2}{4n^3}
\\
 \Leftrightarrow (n-1) \log \left( 1+ \frac{2}{n^2+n-2} \right) & \geq  \log \left( 1+ \frac{4n^2-7n+2}{4n^3} \right).
\end{align*}
Since for every $x > -1$,
\begin{align*}
\log (1 + x) \geq \frac{x}{1+x},
\end{align*}
we obtain that
\begin{align*}
(n-1) \log \left( 1+ \frac{2}{n^2+n-2} \right) & \geq
(n-1) \frac{\frac{2}{n^2 + n-2}}{1+ \frac{2}{n^2 + n-2}}
 = \frac{2(n-1)}{n(n+1)}.
\end{align*}
Also, notice that
\begin{align*}
\frac{2(n-1)}{n(n+1)}  \geq \frac{4n^2 - 7n + 2}{4n^3}
&\Leftrightarrow 8n^3(n-1)  \geq n(n+1)(4n^2-7n+2)\\
&\Leftrightarrow 4n^4 -5n^3 + 5n^2 - 2n \geq 0\\
&\Leftrightarrow 4n^3 -5n^2 +5n -2 \geq 0.
\end{align*}
As $g(t)=4t^3 -5t^2 +5t -2$ is an increasing function on the interval $[0, \infty)$ and $g(1) = 2$, it is clear that $4n^3 -5n^2 +5n -2 \geq 0$ for every $n\geq 1$.
So, using that
\begin{align*}
x \geq \log (1 + x)
\end{align*}
for every $x>-1$, it follows that
\begin{align*}
(n-1) \log \left( 1+ \frac{2}{n^2+n-2} \right) & \geq
\frac{2(n-1)}{n(n+1)} \geq \frac{4n^2 - 7n + 2}{4n^3}\\
&\geq \log \left( 1+ \frac{4n^2-7n+2}{4n^3} \right).
\end{align*}
\end{proof}

Now let us prove that equation \eqref{eq:ecuacionAProbar} is true. In  order to slightly ease the notation, we will make use of Catalan numbers. These numbers have been widely studied in the literature (see, for instance \cite{catalan} or \cite{St} ). They are defined in the following way: for each $n \in \N$, the $n$-th Catalan number is defined as
$$
C_n = \frac{1}{n+1} {2n \choose n } .
$$
It is easy to check that for every $n \in \N$, Catalan numbers satisfy that
\begin{align*}
C_{n+1} = \frac{2(2n+1)}{n+2} C_n.
\end{align*}

\begin{lemma}\label{lem:Induction}
For every $n\geq 1$,
$$
2 \frac{{2n + 2\choose n}}{ {2n\choose n}^{1+2/n} } \geq \frac{1}{2}.
$$
\end{lemma}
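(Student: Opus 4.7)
The plan is to prove the inequality by induction on $n$. The base case $n=1$ is a direct check: $2\binom{4}{1}/\binom{2}{1}^{3}=1\geq \frac{1}{2}$.

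For the inductive step, I first put the claim in a convenient form. Using the identity $\binom{2n+2}{n}=\frac{2(2n+1)}{n+2}\binom{2n}{n}$, the target inequality $2\binom{2n+2}{n}\geq \frac{1}{2}\binom{2n}{n}^{1+2/n}$ becomes $\binom{2n}{n}^{2/n}\leq\frac{8(2n+1)}{n+2}$, or equivalently
\begin{equation*}
\binom{2n}{n}^2\leq\left(\frac{8(2n+1)}{n+2}\right)^n.
\end{equation*}
Applying the standard recursion $\binom{2n}{n}=\frac{2(2n-1)}{n}\binom{2n-2}{n-1}$ (equivalently, the Catalan recursion $C_n=\frac{2(2n-1)}{n+1}C_{n-1}$) together with the inductive hypothesis $\binom{2n-2}{n-1}^2\leq\bigl(\frac{8(2n-1)}{n+1}\bigr)^{n-1}$, the induction step reduces to the purely algebraic inequality
\begin{equation*}
2n^2(n+1)^{n-1}(2n+1)^n\geq (n+2)^n(2n-1)^{n+1}. \quad(\star)
\end{equation*}

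To establish $(\star)$ I would combine Lemma \ref{mayorque1} with a short additional estimate. Lemma \ref{mayorque1}, rewritten, asserts $4n^{n+2}(n+1)^{n-1}\geq(n-1)^{n-1}(n+2)^n(2n-1)^2$; dividing $(\star)$ by this bound, it suffices to show
\begin{equation*}
(2+1/n)^n\geq 2\,(2+1/(n-1))^{n-1}.
\end{equation*}
Setting $h(t)=t\log(1+1/(2t))$ and using $\log(2+1/t)=\log 2+\log(1+1/(2t))$, this reduces to $h(n)\geq h(n-1)$. A short calculation, using the same tool employed in Lemma \ref{mayorque1}, namely $\log(1+x)\geq x/(1+x)$, shows $h'(t)\geq 0$ for $t>0$.

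The main obstacle is tightness: since the sequence $2\binom{2n+2}{n}/\binom{2n}{n}^{1+2/n}$ converges to $1/2$ by the Stirling computation recorded above, both sides of $(\star)$ share the same asymptotic growth, and any crude bound on $\binom{2n}{n}$ would overshoot. Lemma \ref{mayorque1} is calibrated precisely to carry the sharp exponential content, and the elementary monotonicity step closes the remaining constant-order gap. The polynomial manipulations are routine but do require careful bookkeeping.
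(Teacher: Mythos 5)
Your proof is correct and follows essentially the same route as the paper: after the same reduction via $\binom{2n+2}{n}=\frac{2(2n+1)}{n+2}\binom{2n}{n}$, you induct on $n$ using the recursion $\binom{2n}{n}=\frac{2(2n-1)}{n}\binom{2n-2}{n-1}$ with Lemma \ref{mayorque1} as the key estimate. The only difference is that the paper first replaces $2n+1$ by $2n$ and inducts on the resulting inequality $\left(\frac{16n}{n+2}\right)^n\geq\binom{2n}{n}^2$, which Lemma \ref{mayorque1} closes by itself, whereas you induct on the unweakened statement and therefore need the additional (correct) elementary estimate $\left(1+\frac{1}{2n}\right)^n\geq\left(1+\frac{1}{2(n-1)}\right)^{n-1}$ to absorb the discrepancy.
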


\begin{proof}
Note that
\begin{align*}
{2n+2\choose n} & = \frac{(2n+2)!}{n! (n+2)!} =
 \frac{(2n+2)!(n+1)}{(n+1)!(n+1)! (n+2)} = \frac{1}{n+2} {2n+2\choose n+1} (n+1)
 = C_{n+1} (n+1).
\end{align*}
Then, using the properties of Catalan numbers, it follows that
\begin{align*}
2 \frac{{2n + 2\choose n}}{ {2n\choose n}^{1+2/n} }
= 2 \frac{C_{n+1}}{ \frac{1}{(n+1)} {2n\choose n} {2n\choose n}^{2/n} }
= 2 \frac{C_{n+1}}{C_n {2n \choose n}^{2/n}} =2  \frac{2 (2n+1)}{(n+2)} {2n \choose n}^{-2/n}.
\end{align*}
Thus, for every $n \geq 1$,
\begin{align*}
2 \frac{{2n + 2\choose n}}{ {2n\choose n}^{1+2/n} }  \geq \frac{1}{2} \; \;
 & \Leftrightarrow \;  2  \frac{2 (2n+1)}{(n+2)} {2n \choose n}^{-2/n}  \geq \frac{1}{2}
\\
 & \Leftrightarrow \; \frac{2n+1}{n+2}  \geq \frac{1}{8} {2n \choose n}^{2/n}
\\
 & \Leftrightarrow \; \left(  \frac{2n+1}{n+2} \right)^n  \geq
 \frac{1}{8^n} {2n \choose n}^{2}
\end{align*}
Using the fact that $\displaystyle{\left(  \frac{2n+1}{n+2} \right)^n \geq \left(  \frac{2n}{n+2} \right)^n}$ for every $n \geq 1$, and $\displaystyle{{2n \choose n} = (n+1) C_n}$, for proving this lemma it is enough to prove that
\begin{align}
\label{reduccfinal}
\left( \frac{16n}{n+2} \right)^n \geq (n+1)^2 C_n^2,
\end{align}
for every $n \geq 1$. We will prove \eqref{reduccfinal} by induction.

\smallskip

For $n=1$, using that $C_1 = 1$, the inequality is trivially satisfied:
\begin{align*}
\left( \frac{16 \cdot 1}{1+2} \right)^1 = \frac{16}{3} \geq 4 = (1+1)^2 C_1 .
\end{align*}

\smallskip

For $n\geq 1$, assume that \eqref{reduccfinal} is true for every $k\leq n-1$. Then,
\begin{align*}
\left( \frac{16n}{n+2} \right)^n & =
 \frac{16n}{n+2} \left( \frac{16n}{n+2} \right)^{n-1}
 = \frac{16n}{n+2} \left( \frac{n(n+1)}{(n-1)(n+2)} \right)^{n-1}
 \left( \frac{16(n-1)}{n+1} \right)^{n-1}
\\
 & \geq \frac{16n}{n+2} \left( \frac{n(n+1)}{(n-1)(n+2)} \right)^{n-1} n^2 C_{n-1}^2
\\
 & = \frac{16n}{n+2} \left( \frac{n(n+1)}{(n-1)(n+2)} \right)^{n-1}
  \frac{n^2}{(n+1)^2} \frac{C_{n-1}^2}{C_n^2} (n+1)^2 C_n^2 .
\end{align*}
Note that for every $n\in \N$,
$$
C_n = \frac{2(2n-1)}{n+1} C_{n-1}.
$$
Equivalently,
$$
\frac{C_{n-1}^2}{C_n^2} = \frac{(n+1)^2}{2^2 (2n-1)^2}.
$$
Then
\begin{align*}
\left( \frac{16n}{n+2} \right)^n & \geq \frac{16n}{n+2} \left( \frac{n(n+1)}{(n-1)(n+2)} \right)^{n-1}
  \frac{n^2}{(n+1)^2} \frac{C_{n-1}^2}{C_n^2} (n+1)^2 C_n^2
\\
 & = \frac{16n}{n+2} \left( \frac{n(n+1)}{(n-1)(n+2)} \right)^{n-1}
  \frac{n^2}{(n+1)^2} \frac{(n+1)^2}{2^2 (2n-1)^2} (n+1)^2 C_n^2
\\
 & = \frac{4n}{n+2} \left( \frac{n(n+1)}{(n-1)(n+2)} \right)^{n-1}
 \frac{n^2}{(2n-1)^2} (n+1)^2 C_n^2.
\end{align*}
By using Proposition \ref{mayorque1}, we have that
\begin{align*}
\left( \frac{16n}{n+2} \right)^n & \geq \frac{4n}{n+2} \left( \frac{n(n+1)}{(n-1)(n+2)} \right)^{n-1}
 \frac{n^2}{(2n-1)^2} (n+1)^2 C_n^2 \geq (n+1)^2 C_{n}^2 .
\end{align*}
\end{proof}

%\begin{remark}
%Let $K$ be the $n$-dimensional regular simplex. Then, 	

%While in dimension $n=2$ the symmetric convex body that maximizes the isotropic constant is the $2$-dimensional cube \textcolor{red}{cita de donde sale esto}, asymptotically we can find a centrally symmetric convex body whose isotropic constant is greater than the $n$-dimensional cube.

%Let $K$ be the regular simplex. Following the proof of Lemma \ref{riptus}, it is easy to check that for every $u\in S^{n-1}$,
%\begin{align*}
%\rho _{K_{p}(g_K)} (u) = \frac{M_u}{{n+p \choose n}^{1/p}}
%\end{align*}
%where $M_u = sup \lbrace t \; ; \; g_K (tu) \neq 0 \rbrace
 %= \rho _{supp g_K}(u)$. As $supp g_K = K-K$, it follows that for every $p>0$,
%\begin{align*}
%K_{p}(g_K) = \frac{1}{{n+p \choose n}^{1/p}} (K-K),
%\end{align*}
%which implies that for every $p>0$, $L_{K_p (g_K)} = L_{K-K}$. Then, by Theorem %\ref{thm:TeoremaPrincipal}, we have that%
%\begin{align*}
%L_K = D_n L_{K_{n+2}(g_K)} = D_n L_{K-K}.
%\end{align*}
%As $n \to \infty$, $L_K \to \frac{1}{e}$ \textcolor{red}{cita de donde sale esto} and %$D_n \to \sqrt{2}$. Then, it follows that $L_{K-K} \to \frac{1}{\sqrt{2e}}$. For every $n\in \N$, the $n$-dimensional cube has isotropic constant $L_{B_{\infty}^n} = \frac{1}{\sqrt{12}}$ independent of the dimension. Then, there exists $m>0$ such that for every $n \geq m$, the $n$-dimensional difference body $K-K$ has greater isotropic constant than the $n$-dimensional cube.

%\end{remark}

%\section{Open question and future research}

%poner el otro problema que nos planteamos o no.

\noindent {\it Acknowledgements.}
I would like to thank my PhD. supervisor, David Alonso-Guti\'errez, for many helpful discussions and his valuable suggestions during the preparation of this paper.


\begin{thebibliography}{99} % don't worry about the 99

\bibitem{AAGJV}{\sc D. Alonso-Guti\'errez, S. Artstein-Avidan, B. Gonz\'alez Merino, C.H. Jim\'enez, R. Villa.}
\textit{Rogers-Shephard and local Loomis-Whitney type inequalities.}
Math. Ann. {\bf 374}, (2019), pp. 1719--1771.

\bibitem{ABBW}{\sc D. Alonso-Guti\'errez, J. Bastero, J. Bernu\'es, P. Wolff.}
\textit{On the isotropy constant of projections of polytopes.}
J. Funct. Anal {\bf 258}, (2010), pp. 1452--1465.

\bibitem{AJV}{\sc D. Alonso-Guti\'errez, C.H. Jim\'enez, R. Villa.}
\textit{Brunn-Minkowski and Zhang inequalities for convolution bodies}, Advances in Mathematics \textbf{238}, (2013), pp. 50--59 .


\bibitem{B}{\sc K. Ball.}
\textit{Logarithmically concave functions and sections of convex sets in $\R^n$.}
Studia Math. \textbf{88} (1988), pp. 69--84.

\bibitem{B2}{\sc K. Ball.}
\textit{Normed spaces with a weak-Gordon-Lewis property. Functional analysis.}
(Austin, TX, 1987/1989), Lecture Notes in Math. {\bf 1470}, Springer, Berlin, (1991), pp. 36--47.

\bibitem{Be}{\sc L. Berwald}
\textit{Verallgemeinerung eines Mittelwetsatzes von J. Favard, für positive konkave Funktionen.}
Acta Math. {\bf 79}, (1947), pp. 17--37.

\bibitem{Bou0}{\sc J. Bourgain.}
\textit{On high-dimensional maximal functions associated to convex bodies.}
 Amer. J. Math. {\bf 108}, no. 6, (1986), pp. 1467--1476.


\bibitem{Bou}{\sc J. Bourgain.}
\textit{On the distribution of polynomials on high-dimensional convex sets.}
Springer Lecture Notes in Mathematics \textbf{1469} (1991) pp. 127--137.

\bibitem{BKM}{\sc J. Bourgain, B. Klartag, V. Milman.}
\textit{A reduction of the slicing problem to finite volume ratio bodies.}
Comptes Rendus Mathematique {\bf 336}, no 4, (2003), pp. 331--334.


\bibitem{BGVV} {\sc S. Brazitikos,
A. Giannopoulos,
P. Valettas,
B.~H. Vritsiou.} \textit{Geometry of Isotropic Convex Bodies},
Mathematical Surveys and Monographs \textbf{196} (American Mathematical Society, Providence, RI., 2014)



%\bibitem{Slice4}
%Bourgain, J., {\em On high-dimensional maximal functions associated to convex bodies.}, Amer. J. Math., Vol. 108, No. 6, (1986), 1467-1476

%\bibitem{Slice5}
%Bourgain, J., {\em Geometry of Banach spaces and harmonic analysis}, Proc. of Internat. Congress of Mathematicians (Berkeley 1986), Amer. Math. Soc., (1987), 871-878.

\bibitem{Ch} {\sc Y. Chen.}
\textit{An almost constant lower bound of the isoperimetric coefficient in the KLS conjecture.}
Geom.  Funct. Anal. (GAFA) \textbf{31} (2021), pp. 34--61.

\bibitem{EK}{\sc R. Eldan, B. Klartag.}
\textit{Approximately gaussian marginals and the hyperplane conjecture.}
Contemp. Math. {\bf 545}, (2011), pp. 55--68.


\bibitem{JLV}{\sc A. Jambulapati, Y. T. Lee, S. S. Vempala.}
\textit{A Slightly Improved Bound for the KLS constant.} arXiv:2208.11644



\bibitem{K0}{\sc B. Klartag.}
\textit{An isomorphic version of the slicing problem.}
J. Funct. Anal. \textbf{218} (2005), no. 2, pp. 372--394.

\bibitem{K1}{\sc B. Klartag.}
\textit{On convex perturbations with a bounded isotropic constant.}
Geom. Funct. Anal. \textbf{16} (2006), no. 6, pp. 1274--1290.

\bibitem{KK}{\sc B. Klartag, G. Kozma.}
\textit{On the hyperplane conjecture for random convex sets.}
Israel J. Math {\bf 170} (2009), pp. 253--268.
%\bibitem{klaest}
%Klartag, B., {\em A Berry-Esseen type inequality for convex bodies with an unconditional basis}, Probab. Theory Related Fields, Vol. 145, No. 1-2, (2009) 1-33

\bibitem{KL}{\sc B. Klartag, J. Lehec.}
\textit{Bourgain's slicing problem and KLS isoperimetry up to polylog.} arXiv:2203.15551


\bibitem{catalan}
Koshy, T., Samlassi, M., {\em Parity and primality of Catalan numbers}, The College Mathematics Journal. 37 (1): 52-53, (2006).

\bibitem{EM}{\sc E. Milman.}
\textit{Dual mixed volumes and the slicing problem.}
Advances in Mathematics {\bf 207} no. 2, (2006), pp. 566--598.

\bibitem{GZ}{\sc R.J. Gardner, G. Zhang.}
\textit{Affine inequalities and radial mean bodies.}
 Am. J. Math. {\bf 120} (3) (1998), pp. 505--528.

\bibitem{KMP}{\sc H. K\"onig, M. Meyer, A. Pajor.}
\textit{The isotropy constants of the Schatten classes are bounded.}
 Math. Ann. {\bf 312}, no. 4, (1998), pp. 773--783.

\bibitem{MP}
{\sc V. Milman, V, A. Pajor.}
\textit{ Isotropic positions and inertia ellipsoids and zonoids of
the unit ball of a normed n-dimensional space},
Geometric aspects of functional
analysis (1987-88), Lecture Notes in Math. {\bf 1376}, pp. 64--104, Springer, Berlin, 1989.

\bibitem{RS}{\sc C.A. Rogers, G.C. Shephard,}
\textit{The difference body of a convex body.}
 Arch. Math. (Basel) {\bf 8} (1957) pp. 220--233.

\bibitem{Sch}{\sc R. Schneider},
\textit{ Convex bodies: The Brunn-Minkowski theory.}
2nd expanded ed. Encyclopedia of Mathematics and its Applications, {\bf 151}. Cambridge University Press, Cambridge, 2014.


\bibitem{St}{\sc R. Stanley},
\textit{ Catalan Numbers.}
 Cambridge University Press, New York, 2015.



\bibitem{Z}{\sc G.Y. Zhang,}
\textit{Restricted chord projection and affine inequalities.}
Geom. Dedicata {\bf 39} (2) (1991), pp. 213--222.





\end{thebibliography}
\end{document}